\newtheorem{theorem}{Theorem}
\newtheorem{proposition}{Proposition}
\newtheorem{remark}{Remark}
\newtheorem{corollary}{Corollary}
\DeclareMathOperator{\ad}{ad}
\DeclareMathOperator{\Ad}{Ad}
\DeclareMathOperator{\A}{A}
\DeclareMathOperator{\SO}{SO}
\DeclareMathOperator{\Sol}{Sol}
\DeclareMathOperator{\SL}{SL}
\DeclareMathOperator{\S^1}{S^1}
\begin{document}

\leftline {УДК 514.82 + 512.81 + 517.977+517.838+519.46}
\vspace{2mm}

\title[The G\"odel Universe and the Iwasawa decomposition]{The G\"odel Universe as a Lie group with left-invariant Lorentz metric and\newline the Iwasawa decomposition} 
\author{V.~N.~Berestovskii}
\thanks{The work was carried out within the framework of the State Contract to the IM SB RAS, project FWNF-2022-0006.}
\address{Sobolev Institute of Mathematics of the SB RAS,\newline
4 Acad. Koptyug Ave., Novosibirsk 630090, Russia}
\email{vberestov@inbox.ru}
\begin{abstract}
We discuss models of the G\"odel Universe as Lie groups with left-invariant Lorentz metric for two simply connected four-dimensional Lie groups, the Iwasawa decomposition for semisimple Lie groups, and left-invariant Lorentz metric on $\SL(2,\mathbb{R})$, following K.-H.~Neeb. Also we show that the isometry between two non-isomorphic sub-Riemannian Lie group, constructed by
A.~Agrachev and D.~Barilari, is induced by some Iwasawa decomposition of
$\SL(2,\mathbb{R})$.
\vspace{2mm}

2020 Mathematical Subject Classification 83C20, 53C17, 53C50, 49J15, 22E30   
	
\vspace{2mm}
\noindent {\it Keywords and phrases}: G\"odel Universe, Iwasawa decomposition, left-invariant Lorentz metric, left-invariant sub-Riemannian metric, Lie algebra,  Lie group.
\end{abstract}    	
\maketitle
                                                                         
\rightline {Dedicated to the memory of Vitaly Roman'kov}
\vspace{2mm}
                                                                         
\section{Introduction}

Kurt G\"odel in paper \cite{God1949} of 1949 introduced the Lorentz metric  (\ref{lig}) of the signature $(+,-,-,-)$ on the space $\mathbb{R}^4.$ The
G\"odel Universe (space-time) $S$ is a solution of the General Relativity Theory (the Einstein gravitation equations). 

In paper \cite{Ber2024}, the author found timelike and isotropic geodesics of the G\"odel Universe, considered as the Lie group $G=(\mathbb{R},+)\times \A^+(\mathbb{R})\times (\mathbb{R},+)$  with left invariant Lorentz metric.  Here $\A^+(\mathbb{R})$ is connected Lie group of affine transformations on $\mathbb{R}.$  

The above left-invariant Lorentz metric on $G$ and behaviour of its geodesics are defined essentially by the corresponding induced left-invariant Lorentz metric $ds^2_0$ on the subgroup $G_3=(\mathbb{R},+)\times \A^+(\mathbb{R}).$ 

Professor Karl--Hermann Neeb wrote to the author that it is possible to realize the G\"odel Universe otherwise. He sent an electronic version of his joint with Joachim Hilgert book \cite{HilgNeeb1993}, where in section 2.7 ''G\"odel's cosmological  model and universal covering of $\SL(2,\mathbb{R})$'' is suggested a left-invariant Lorentz metric on $\widetilde{\SL(2,\mathbb{R})}$ {\it which is isometric to $(G_3,ds^2_0)$} as stated there.  

In this connection, it is useful to mention the paper \cite{AgrBar2012} by A.~Agrachev and D.~Barilari, where the autors obtained a full classification of left-invariant sub-Riemannian metrics on three-dimensional Lie groups and ''explicitly find a sub-Riemannian isometry between  nonisomorphic Lie groups $\SL(2,\mathbb{R})$ and 
$\SO(2)\times A^+(\mathbb{R})$'' \cite{AgrBar2012}. 

The existence of such isometry was indicated ealier
in \cite{FalbGor1996} by Falbel and Gorodski.

In a message to the author, Professor Neeb explains the mentioned two isometries by a diffeomormism of Lie groups $\SL(2,\mathbb{R})$ and $\SO(2)\times A^+(\mathbb{R})$ by means of the Iwasawa decomposition for $\SL(2,\mathbb{R})$; let us cite Theorems 6.5.1 and 9.1.3 from \cite{Helg2001}.

In p. 10.6.4 (i) from \cite{Helg2001} are indicated the following isomorphisms of Lie algebras: 
$$\mathfrak{sl}(2,\mathbb{R})\cong \mathfrak{su}(1,1)\cong \mathfrak{so}(2,1)\cong \mathfrak{sp}(1,\mathbb{R}).$$
Consequently, simply connected Lie groups with these Lie algebras are isomorphic.

In Theorem \ref{main} we prove some properties of special left-invariant Lorentz metrics on  Lie groups which support the mentioned statement on the isometry of two left-invariant Lorentz metrics from \cite{HilgNeeb1993}. Also 
we show in Proposition \ref{PsiAB} that the isometry between two non-isomorphic sub-Riemannian Lie group, constructed by A.~Agrachev and D.~Barilari, is induced by some Iwasawa decomposition of 
$\SL(2,\mathbb{R})$. 

The author expresses his gratitudes to Professor Neeb for fruitful discussions and anonymous referee for useful remarks.  

\section{The G\"odel Universe as a Lie group with left-invariant Lorentz metric}
\label{subl}

G\"odel introduced in \cite{God1949} his space-time $S$ as $\mathbb{R}^4$ with the linear element
\begin{equation}
\label{lig}
ds^2=a^2\left(dx_0^2+2e^{x_1}dx_0dx_2+ \frac{e^{2x_1}}{2}dx_2^2-dx_1^2-dx_3^2\right),\quad  a>0. Ыд
\end{equation}
G\"odel noticed in \cite{God1949} that it is possible to rewrite this quadratic form in view of   
\begin{equation}
\label{lig1}
ds^2=a^2\left[\left(dx_0+e^{x_1}dx_2\right)^2-dx^2_1-
\frac{e^{2x_1}}{2}dx^2_2-dx^2_3\right],	  
\end{equation}
which shows obvious that its signature is equal everywhere to $(+,-,-,-).$

We shall assume that $a=1.$ 

G\"odel noticed in \cite{God1949} that on $(S,ds^2)$ acts simply transitively a four-dimensional isometry Lie group. It is easy to see that such action could be written as 
\begin{equation}
\label{gr}
x_0=x_0'+a,\quad x_1=x_1'+b,\quad x_2=x_2'e^{-b}+c,\quad x_3=x_3'+d
\end{equation}
with arbitrary $a,b,c,d\in\mathbb{R}.$ This implies that corresponding Lie group $G$ is the simplest simply connected noncommutative four-dimensional Lie group of the view 
\begin{equation}
\label{g1}	
G\cong [(\mathbb{R},+)\times G_{2}]\times (\mathbb{R},+),
\end{equation} 
where $G_2$ is unique up to isomorphism, necessary isomorphic to $\mathbb{R}^2$, two-dimensional noncommutative Lie group. 
The Lie group $G_2$ is isomorphic to the Lie group $A^+(\mathbb{R})$ of preserving orientation affine transformations of the real direct line  $(\mathbb{R},+).$  

In case under consideration, identifying the quad $(x'_0,x_1',x_2',x'_3)$ with 
the vector $(x_2', x_1', x_0',x_3',1)^T,$ where $^T$ is the sign of transposition,
the action of the group $G$ on $\mathbb{R}^4$ by formula $(\ref{gr})$ has the view $(x_2, x_1, x_0, x_3,1)^T=A(x_2', x_1', x_0',x_3',1)^T,$ where 
\begin{equation}
\label{A}
A= \left(\begin{array}{ccccc}
e^{-b} & 0 & 0 & 0 & c \\
0 & 1 & 0 & 0 & b \\ 
0 & 0 & 1 & 0 & a\\
0 & 0 & 0  & 1 & d \\
0 & 0 & 0 &  0 & 1  \end{array}\right).
\end{equation}
Under this the equality 
\begin{equation}
\label{B}
\left(\begin{array}{ccccc}
e^{-x_1} & 0 & 0 & 0 & x_2 \\
0 & 1 &  0 & 0 & x_1 \\ 
0 & 0 & 1 & 0 & x_0\\
0 & 0 & 0 & 1 & x_3\\
0 & 0 & 0 & 0  & 1 \end{array}\right)(0,0,0,0,1)^T=(x_2,x_1,x_0, x_3,1)^T
\end{equation}
sets the bijection of the group $G$ onto $\mathbb{R}^4$ and the unit of $G$ corresponds to the zero-vector $(0,0,0,0)\in\mathbb{R}^4.$ On base of this, (\ref{g1}) and (\ref{lig}), we can identify $(S,ds^2)$ with the Lie group $G$ equipped with left-invariant Lorentz metric. Let 
$$e_0=\frac{\partial} {\partial x_0}(0),e_1=\frac{\partial}{\partial x_1}(0),e_2=\frac{\partial}{\partial x_2}(0),e_3=\frac{\partial}{\partial x_3}(0))$$ 
be the basis of the Lie algebra $\mathfrak{g}$ of the Lie group $G$ at the unit of $G$, corresponding to coordinates $(x_0,x_1,x_2,x_3)$. Then, according to what has been said and (\ref{lig}), the components of the linear element $ds^2$ with respect to this basis are equal to 
\begin{equation}
\label{g} 
g_{00}=1, g_{02}=g_{20}=1, g_{22}=\frac{1}{2}, g_{11}=-1, g_{33}=-1, g_{ij}=g_{ji}=0, i\neq j, j\neq 2.
\end{equation}

According to (\ref{B}), the Lie subgroup $G_3:=(\mathbb{R},+)\times G_{2}$ can be identified with the matrix Lie group   
\begin{equation}
\label{C}
\left(\begin{array}{cccc}
e^{-x_1} & 0 & 0 &  x_2 \\
0 & 1 &  0 & x_1 \\ 
0 & 0 & 1 & x_0\\
0 & 0 & 0 &  1 \end{array}\right),\quad (x_0,x_1,x_2)\in \mathbb{R}^3.
\end{equation}

It is obvious that $(S,ds^2)=(S_0,ds_0^2)\times (S_1,ds_1^2),$ where $S_0=\mathbb{R}^3,$
$S_1=\mathbb{R},$
\begin{equation}
\label{lin}	
ds_0^2=dx_0^2+2e^{x_1}dx_0dx_2+ \frac{e^{2x_1}}{2}dx_2^2-dx_1^2,\quad ds_{1}^2=-dx_3^2.	
\end{equation}
Also it is clear that we can consider $(S_0,ds_0^2)$ as the matrix Lie group (\ref{C}) with left-invariant Lorentz metric, which according to (\ref{g}) has components 
\begin{equation}
\label{g0} 
g_{00}=1, g_{02}=g_{20}=1, g_{22}=\frac{1}{2}, g_{11}=-1, g_{ij}=g_{ji}=0, i\neq j, j\neq 2;
\end{equation}
with respect to the basis $e_0,e_1,e_2$ of the Lie algebra $\mathfrak{g}_3$
of the matrix Lie group (\ref{C}).

In consequence of (\ref{B}), for the Lie algebra $\mathfrak{g}_3$ of the Lie group $G_3,$ 
\begin{equation}
\label{e}	
e_0 = \left(\begin{array}{cccc}
0 & 0 & 0 & 0 \\
0 & 0 & 0 & 0 \\
0 & 0 & 0 & 1\\ 
0 & 0 & 0 & 0\end{array}\right),\quad	
e_1 = \left(\begin{array}{cccc}
-1 & 0 & 0 & 0 \\
0 & 0 & 0 & 1 \\
0 & 0 & 0 & 0\\ 
0 & 0 & 0 & 0\end{array}\right),\quad 
e_2 = \left(\begin{array}{cccc}
0 & 0 & 0 & 1 \\
0 & 0 & 0 & 0\\
0 & 0 & 0 & 0 \\ 
0 & 0 & 0 & 0\end{array}\right).	 
\end{equation}
Then in the Lie algebra $\mathfrak{g}_3$,
\begin{equation}
\label{sc}
[e_1,e_2]=e_1e_2-e_2e_1=-e_2, \quad [e_0,e_1]=[e_0,e_2]=0.	 
\end{equation}

\section{The Iwasawa decompositions of Lie algebras and Lie groups}

Let $\mathfrak{g}$ be a semisimple real Lie algebra, $\sigma$ be some Cartan involution of $\mathfrak{g},$ and $\mathfrak{g}=\mathfrak{k}\oplus \mathfrak{p}$ be the corresponding Cartan decomposition ($\mathfrak{k}$ is the Lie subalgebra of $\mathfrak{g}$, consisting of fixed points relative to $\sigma$). Let us denote by $\mathfrak{a}$ a maximal commutative subspace in
$\mathfrak{p}.$ Then there is the following {\it Iwasawa decomposition of Lie algebra} $\mathfrak{g}$.

\begin{theorem}
\label{GG} (4.7.2) in \cite{GotoGr1978}.
Let $\mathfrak{g}$ be a semisimple real Lie algebra. 

Then there exists a direct sum of vector subspaces in $\mathfrak{g}$
\begin{equation}
\label{Iwalg}	 
\mathfrak{g}=\mathfrak{k}\oplus \mathfrak{a}\oplus\mathfrak{n},
\end{equation}
where $\mathfrak{n}$ is a nilpotent subalgebra in $\mathfrak{g}$
such that the endomorphism $\ad X$ is nilpotent for every $X\in\mathfrak{n},$
and $\mathfrak{a}\oplus\mathfrak{n}$ is a solvable subalgebra in $\mathfrak{g}.$
\end{theorem}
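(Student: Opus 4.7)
The plan is to build the decomposition through the restricted root space decomposition of $\mathfrak{g}$ with respect to $\mathfrak{a}$. First, I would equip $\mathfrak{g}$ with the positive definite inner product $B_\sigma(X,Y)=-B(X,\sigma Y)$, where $B$ is the Killing form; this is positive definite because $B$ is negative definite on $\mathfrak{k}$, positive definite on $\mathfrak{p}$, and $\mathfrak{k}\perp\mathfrak{p}$ relative to $B$. Using $\ad$-invariance of $B$ together with the fact that $\sigma$ is a Lie algebra involution, one checks that $\ad X$ is $B_\sigma$-symmetric for $X\in\mathfrak{p}$ and $B_\sigma$-antisymmetric for $X\in\mathfrak{k}$. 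Consequently, for every $H\in\mathfrak{a}$ the operator $\ad H$ is diagonalizable over $\mathbb{R}$, and since $\mathfrak{a}$ is abelian the family $\{\ad H:H\in\mathfrak{a}\}$ admits a simultaneous eigenspace decomposition. This yields the restricted root space decomposition
$$\mathfrak{g}=\mathfrak{g}_0\oplus\bigoplus_{\lambda\in\Sigma}\mathfrak{g}_\lambda,\qquad \mathfrak{g}_\lambda=\{X\in\mathfrak{g}:[H,X]=\lambda(H)X\ \text{for all}\ H\in\mathfrak{a}\},$$
where $\Sigma\subset\mathfrak{a}^*\setminus\{0\}$ is finite and $\mathfrak{g}_0=\mathfrak{m}\oplus\mathfrak{a}$ with $\mathfrak{m}=\mathfrak{g}_0\cap\mathfrak{k}$.

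Next, I would fix a total ordering on $\mathfrak{a}^*$, for instance by picking a regular element $H_0\in\mathfrak{a}$ and declaring $\lambda>0$ if and only if $\lambda(H_0)>0$, let $\Sigma^+$ be the positive restricted roots, and set $\mathfrak{n}=\bigoplus_{\lambda\in\Sigma^+}\mathfrak{g}_\lambda$. The bracket law $[\mathfrak{g}_\lambda,\mathfrak{g}_\mu]\subset\mathfrak{g}_{\lambda+\mu}$, an immediate consequence of the Jacobi identity, makes $\mathfrak{n}$ a subalgebra, and finiteness of $\Sigma^+$ forces its lower central series to terminate, so $\mathfrak{n}$ is nilpotent. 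The same bracket law shows that iterating $\ad X$ on $\mathfrak{g}$ for $X\in\mathfrak{n}$ shifts weights by positive roots and eventually exits the finite support of $\Sigma\cup\{0\}$, so $\ad X$ is nilpotent as an endomorphism of $\mathfrak{g}$. Moreover $[\mathfrak{a},\mathfrak{n}]\subset\mathfrak{n}$, so $\mathfrak{a}\oplus\mathfrak{n}$ is a subalgebra whose derived subalgebra lies in the nilpotent ideal $\mathfrak{n}$; hence $\mathfrak{a}\oplus\mathfrak{n}$ is solvable.

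Finally, I would verify the vector space identity $\mathfrak{g}=\mathfrak{k}\oplus\mathfrak{a}\oplus\mathfrak{n}$. Since $\sigma$ negates $\mathfrak{a}$, one has $\sigma\mathfrak{g}_\lambda=\mathfrak{g}_{-\lambda}$. Any $X\in\mathfrak{g}_\lambda$ with $\lambda\in\Sigma^+$ lies in $\mathfrak{n}$; for $\lambda\in -\Sigma^+$ we write $X=(X+\sigma X)-\sigma X$ with $X+\sigma X\in\mathfrak{k}$ and $-\sigma X\in\mathfrak{g}_{-\lambda}\subset\mathfrak{n}$; and the zero-weight part splits as $\mathfrak{g}_0=\mathfrak{m}\oplus\mathfrak{a}$ with $\mathfrak{m}\subset\mathfrak{k}$. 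Summing these pieces yields $\mathfrak{g}=\mathfrak{k}+\mathfrak{a}+\mathfrak{n}$. For directness, if $H+N\in\mathfrak{k}$ with $H\in\mathfrak{a}$ and $N=\sum_{\lambda\in\Sigma^+}N_\lambda\in\mathfrak{n}$, then applying $\sigma$ gives $-H+\sum_{\lambda\in\Sigma^+}\sigma N_\lambda=H+N$, and comparing weight components (using $\sigma N_\lambda\in\mathfrak{g}_{-\lambda}$) forces $H=0$ and each $N_\lambda=0$. The main obstacle is establishing the restricted root space decomposition cleanly; once the diagonalizability of $\ad H$ for $H\in\mathfrak{a}$ and the bracket rule $[\mathfrak{g}_\lambda,\mathfrak{g}_\mu]\subset\mathfrak{g}_{\lambda+\mu}$ are in hand, the rest is essentially bookkeeping driven by the action of $\sigma$ on weight spaces.
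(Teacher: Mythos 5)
The paper does not prove this statement at all: it is quoted verbatim as Theorem (4.7.2) of Goto--Grosshans, so there is no in-paper argument to compare against. Your proof is the standard restricted-root-space argument (essentially the one in the cited reference and in Helgason), and it is correct; the only step you gloss over is that $\mathfrak{g}_0\cap\mathfrak{p}=\mathfrak{a}$, which needs the maximality of $\mathfrak{a}$ in $\mathfrak{p}$ (any $X\in\mathfrak{p}$ centralizing $\mathfrak{a}$ would enlarge it to the abelian subspace $\mathfrak{a}+\mathbb{R}X$), but this is routine and does not affect the validity of the argument.
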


As an example, the authors of \cite{GotoGr1978} give the decomposition
(\ref{Iwalg}) for $\mathfrak{g}=\mathfrak{sl}(n,\mathbb{R}).$ In this
case $\mathfrak{k}$ is the Lie subalgebra of skew-symmetric matrices,
$\mathfrak{a}$ is the Lie subalgebra of diagonal matrices with zero trace, and 
$\mathfrak{n}$ is the Lie subalgebra of strictly upper triangular matrices. 
In particular, for $\mathfrak{g}=\mathfrak{sl}(2,\mathbb{R})$ we have
\begin{equation}
\label{sl2}
\mathfrak{k}=
\left\{\left(\begin{array}{cc}
0 & t \\
-t & 0\end{array}\right)\right\},\quad \mathfrak{a}=
\left\{\left(\begin{array}{cc}
t & 0 \\
0 & -t\end{array}\right)\right\},\quad \mathfrak{n}=
\left\{\left(\begin{array}{cc}
0 & t \\
0 & 0\end{array}\right)\right\},\quad t\in\mathbb{R},
\end{equation}
with natural basis
\begin{equation}
\label{basel2}
f_0=
\left(\begin{array}{cc}
0 & 1 \\
-1 & 0\end{array}\right),\quad f_1=
\left(\begin{array}{cc}
1 & 0 \\
0 & -1\end{array}\right),\quad f_2=
\left(\begin{array}{cc}
0 & 1 \\
0 & 0\end{array}\right)
\end{equation}
and Lie brackets for this basis
\begin{equation}
\label{brack}
[f_0,f_1]=2f_0-4f_2,\quad [f_0,f_2]=f_1,\quad [f_1,f_2]=2f_2.	
\end{equation}

Let $K=\exp(\mathfrak{k}),$	$A=\exp(\mathfrak{a}),$ $N=\exp(\mathfrak{n})$
be Lie subgroups of the semisimple Lie group $G,$ corresponding to the decomposition (\ref{Iwalg}). 

\begin{theorem}
\label{Iwgr} (Theorem 9.1.3 in \cite{Helg2001})
Let $G$ be a connected semisimple real Lie group. Then $G=KAN$ and the mapping
\begin{equation}
\label{map}
(k,a,n)\rightarrow kan
\end{equation}
is the diffeomorphism of manifold $K\times A\times N$ onto the Lie group $G.$
\end{theorem}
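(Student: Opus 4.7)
The plan is to upgrade the algebraic Iwasawa decomposition of Theorem~\ref{GG} to the group level by establishing that the smooth multiplication map
\begin{equation*}
\mu : K \times A \times N \longrightarrow G,\qquad \mu(k,a,n)=kan,
\end{equation*}
is simultaneously a local diffeomorphism, surjective, and injective, and hence a global diffeomorphism.

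First I would compute the differential of $\mu$ at the identity $(e,e,e)$. By the product rule for Lie group multiplication, this differential sends $(X,Y,Z)\in\mathfrak{k}\oplus\mathfrak{a}\oplus\mathfrak{n}$ to $X+Y+Z\in\mathfrak{g}$, which is a linear isomorphism precisely by Theorem~\ref{GG}. I would then propagate this to an arbitrary point $(k_0,a_0,n_0)$ by exploiting the identities $\mu(k_0k,a,n)=k_0\mu(k,a,n)$ and $\mu(k,a,nn_0)=\mu(k,a,n)n_0$, together with the fact that $A$ normalises $N$ (because $\mathfrak{a}\oplus\mathfrak{n}$ is a solvable Lie subalgebra by Theorem~\ref{GG}, so that $\Ad(a_0)\mathfrak{n}\subseteq\mathfrak{n}$). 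Since left and right translations in $G$ are diffeomorphisms, this propagation shows that $d\mu$ is an isomorphism everywhere and $\mu$ is a local diffeomorphism.

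For surjectivity my approach would be geometric, via the Riemannian symmetric space $X=G/K$. Using the Cartan involution $\sigma$ and the restriction of the Killing form to $\mathfrak{p}$, one equips $X$ with a $G$-invariant Riemannian metric of nonpositive sectional curvature, so $X$ is a Cartan--Hadamard manifold. The analytic subgroup $S=AN$ attached to the solvable subalgebra $\mathfrak{a}\oplus\mathfrak{n}$ is closed in $G$ (here one uses that $\exp$ is a diffeomorphism from $\mathfrak{a}\oplus\mathfrak{n}$ onto $S$, a feature of simply connected solvable groups), and its orbit map $S\to X$, $s\mapsto sK$, has everywhere-injective differential by the algebraic decomposition~(\ref{Iwalg}); the Cartan--Hadamard completeness of $X$ upgrades this to a diffeomorphism $S\to X$. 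From this bijection it follows at once that $K\cap S=\{e\}$, that $G=KS=KAN$, and that the factorisation through $\mu$ is unique: if $k_1a_1n_1=k_2a_2n_2$ then $k_2^{-1}k_1=a_2n_2(a_1n_1)^{-1}\in K\cap S=\{e\}$, after which the unique factorisation in the semidirect product $S=A\ltimes N$ finishes injectivity.

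The step I expect to be the main obstacle is establishing the closedness of $S=AN$ and the diffeomorphism $S\to G/K$. This is where the noncompact semisimple structure is genuinely used: one needs the theory of restricted roots of $\mathfrak{g}$ with respect to $\mathfrak{a}$, the positivity of the Killing form on $\mathfrak{p}$, and the resulting nonpositive curvature of $G/K$, in order to guarantee that the infinitesimal decomposition (\ref{Iwalg}) integrates to a \emph{global} factorisation free of monodromy. Once this geometric backbone is in place, the differential computation, the propagation of the local-diffeomorphism property by translations, and the injectivity via $K\cap AN=\{e\}$ are comparatively routine.
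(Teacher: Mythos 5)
The paper offers no proof of this statement: Theorem~\ref{Iwgr} is quoted verbatim from Helgason (Theorem 9.1.3 of \cite{Helg2001}, i.e.\ the Iwasawa decomposition, Ch.~VI, Theorem~5.1 in the English edition), so there is no in-paper argument to compare against. Your plan is essentially the standard proof --- and in fact close to Helgason's own: regularity of $(k,a,n)\mapsto kan$ from the linear isomorphism $\mathfrak{k}\oplus\mathfrak{a}\oplus\mathfrak{n}\cong\mathfrak{g}$ plus translation/$\Ad$ conjugation, surjectivity and $K\cap AN=\{e\}$ via the simply transitive action of $AN$ on the Cartan--Hadamard space $G/K$, and injectivity from the semidirect product structure of $AN$. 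As a plan it is sound and correctly isolates the genuine difficulty (closedness of $AN$ and the global statement $AN\cdot o=G/K$).

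Two justifications you give in passing are not quite right, though the conclusions they are meant to support are true. First, $\Ad(a)\mathfrak{n}=\mathfrak{n}$ does \emph{not} follow merely from $\mathfrak{a}\oplus\mathfrak{n}$ being a solvable subalgebra (solvability of a sum does not make one summand an ideal); it follows from the construction of $\mathfrak{n}$ as the sum of positive restricted root spaces, on each of which $\ad\mathfrak{a}$ acts by scalars. Second, ``$\exp$ is a diffeomorphism onto $S$, a feature of simply connected solvable groups'' is false in that generality (the universal cover of the Euclidean motion group is a counterexample); what is true, and what you need, is that $\mathfrak{a}\oplus\mathfrak{n}$ is \emph{completely} solvable (all eigenvalues of $\ad$ real), for which $\exp$ is indeed a global diffeomorphism on the simply connected group. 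Even then, closedness of $AN$ in $G$ requires its own argument (e.g.\ via the properness of the orbit map on $G/K$, or an open-and-closed orbit argument), so you should not treat it as automatic. With those two points repaired by appeal to the restricted root space decomposition, your outline fills in to a complete proof.
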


\begin{corollary}
\label{diffeo}
The Lie group $G$ is diffeomorphic to Lie groups $K\times AN$ and 
$K\times A\times N.$ 
\end{corollary}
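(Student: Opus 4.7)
The plan is to read the statement as two separate diffeomorphism assertions and deduce each one directly from Theorem \ref{Iwgr}, with the only extra ingredient being that $AN$ is a Lie subgroup of $G$.

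First I would dispose of the assertion $G\cong K\times A\times N$, which is essentially tautological: Theorem \ref{Iwgr} asserts precisely that the multiplication map $\mu\colon K\times A\times N\to G$, $(k,a,n)\mapsto kan$, is a diffeomorphism of manifolds, so there is nothing further to prove in this case.

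Next I would treat the diffeomorphism $G\cong K\times AN$. By Theorem \ref{GG}, $\mathfrak{a}\oplus\mathfrak{n}$ is a solvable Lie subalgebra of $\mathfrak{g}$; the corresponding connected subgroup $S\subseteq G$ is a closed Lie subgroup, and one checks (for instance using the normalizing relation $[\mathfrak{a},\mathfrak{n}]\subseteq\mathfrak{n}$, which makes $\mathfrak{n}$ an ideal in $\mathfrak{a}\oplus\mathfrak{n}$) that $S=AN=NA$. Thus $AN$ is a genuine Lie subgroup. I would then consider the multiplication map $\nu\colon A\times N\to AN$. Because $\mu$ is a diffeomorphism in Theorem \ref{Iwgr} and factors as $\mu(k,a,n)=k\cdot\nu(a,n)$, one sees that $\nu$ is injective; smoothness is automatic, and a direct check at the identity shows $d\nu_{(e,e)}$ is the linear isomorphism $\mathfrak{a}\oplus\mathfrak{n}\to T_e(AN)$, so $\nu$ is a local diffeomorphism at every point by left-translation, and hence a global diffeomorphism $A\times N\to AN$.

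Finally, I would package these pieces together. The map $\Phi\colon K\times AN\to G$, $\Phi(k,s)=ks$, is the composition of $\mathrm{id}_K\times\nu^{-1}\colon K\times AN\to K\times A\times N$ with $\mu$, hence is a diffeomorphism. Both claimed diffeomorphisms thus follow. I do not foresee a serious obstacle: the only substantive technical point is confirming that $AN$ is a closed Lie subgroup and that $\nu$ is a diffeomorphism, and both are immediate consequences of the global nature of the Iwasawa decomposition in Theorem \ref{Iwgr}.
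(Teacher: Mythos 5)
Your argument is correct and follows the same route the paper intends: the paper states this corollary without proof as an immediate consequence of Theorem \ref{Iwgr} (with the fact that $AN$ is a closed Lie subgroup recorded separately in Proposition \ref{subgr}). You have merely filled in the routine verification that $A\times N\to AN$ is a diffeomorphism, which is consistent with the paper's reading.
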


Theorem \ref{GG} and Theorem 6.1.1 from \cite{Helg2001} imply the following

\begin{proposition} 
\label{subgr}
The sets $K,$ $A,$ $N,$ and $AN$ are connected closed Lie subgroups of
the Lie group $G,$ where $\Ad_G(K)$ is compact, $A$ is commutative, $N$ is nilpotent, and $AN$ is solvable. The subgroup $K$ contains the center $Z$ of the Lie group $G.$ In addition, $K$ is compact if and only if the center $Z$ of $G$ is finite; in this case $K$ is a maximal compact subgroup of the Lie group $G.$
\end{proposition}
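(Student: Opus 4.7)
The plan is to derive each assertion from the Lie algebra Iwasawa decomposition (Theorem \ref{GG}) and the Cartan decomposition machinery in Theorem 6.1.1 of \cite{Helg2001}. First I would pass from subalgebras to subgroups: $\mathfrak{k}$, $\mathfrak{a}$, $\mathfrak{n}$, and $\mathfrak{a}\oplus\mathfrak{n}$ integrate to unique connected Lie subgroups $K$, $A$, $N$, $AN$ of $G$. Closedness is routine: $\exp\colon\mathfrak{n}\to N$ is a global diffeomorphism because $\mathfrak{n}$ is nilpotent; $\mathfrak{a}\subset\mathfrak{p}$ forces $A=\exp(\mathfrak{a})$ to be closed via the global Cartan decomposition $G=K\exp(\mathfrak{p})$ (which is a diffeomorphism on $K\times\mathfrak{p}$); $K$ is closed as the identity component of the fixed set of the lifted Cartan involution; and $AN$ is closed because, under the Iwasawa diffeomorphism $K\times A\times N\to G$ of Theorem \ref{Iwgr}, $AN$ is the image of $\{e\}\times A\times N$. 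Commutativity of $A$, nilpotency of $N$, and solvability of $AN$ then follow from the analogous properties of the Lie algebras and connectedness.

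Next I would handle the compactness of $\Ad_G(K)$. The form $B_\sigma(X,Y):=-B(X,\sigma Y)$, built from the Killing form $B$ and the Cartan involution $\sigma$, is positive definite on $\mathfrak{g}$, and $\Ad(K)$ preserves it because $K$ commutes with $\sigma$. Hence $\Ad_G(K)$ is a closed subgroup of the compact orthogonal group of $B_\sigma$, and therefore compact. For $Z\subseteq K$: given $z\in Z$, write $z=k\exp(X)$ with $k\in K$, $X\in\mathfrak{p}$; centrality forces $\Ad(h)X=X$ for every $h\in G$, but the adjoint representation of a connected semisimple Lie group has no nonzero invariants, so $X=0$ and $z=k\in K$. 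Since $\ker(\Ad_G)=Z$ (semisimplicity) and $Z\subseteq K$, one obtains $K/Z\cong\Ad_G(K)$, which is compact; therefore $K$ is compact if and only if $Z$ is finite.

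For the final assertion, assuming $K$ compact, I would invoke the Cartan fixed point theorem: the symmetric space $G/K$ carries a $G$-invariant Riemannian metric of nonpositive sectional curvature, and any compact subgroup $H\subset G$ fixes a point $gK$ of $G/K$; then $g^{-1}Hg\subseteq K$, so $K$ is maximal compact. The main obstacle I anticipate is precisely this last step, as it is not purely algebraic: it rests on the full Cartan decomposition $G=K\exp(\mathfrak{p})$ together with the CAT(0) geometry of $G/K$, which is substantially deeper than the earlier structural arguments. Everything else amounts to bookkeeping with the Iwasawa decomposition and the Killing form.
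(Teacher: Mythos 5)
Your proposal is a genuine proof where the paper offers none: the paper simply asserts that the Proposition follows from Theorem \ref{GG} together with Theorem 6.1.1 of \cite{Helg2001}, and that cited theorem of Helgason already contains verbatim every assertion about $K$ (connected, closed, contains $Z$, $\Ad_G(K)$ compact, $K$ compact iff $Z$ finite, and then maximal compact), while closedness of $A$, $N$, $AN$ and their algebraic properties come from the Iwasawa theorem. What you have written is essentially a reconstruction of the proof of Helgason's theorem, using exactly the tools he uses (the positive definite form $B_\sigma(X,Y)=-B(X,\sigma Y)$, the global Cartan decomposition $G=K\exp(\mathfrak{p})$, and the Cartan fixed point theorem on the nonpositively curved space $G/K$ for maximality). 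That is more informative than the citation, and the overall architecture is sound.

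One step, however, is not justified as stated. To prove $Z\subseteq K$ you write $z=k\exp(X)$ with $k\in K$, $X\in\mathfrak{p}$, and claim centrality forces $\Ad(h)X=X$ for \emph{every} $h\in G$. Conjugating by a general $h$ gives $(hkh^{-1})\exp(\Ad(h)X)=k\exp(X)$, but $hkh^{-1}$ need not lie in $K$ and $\Ad(h)X$ need not lie in $\mathfrak{p}$, so uniqueness of the Cartan decomposition cannot be invoked; the deduction is valid only for $h\in K$, and $[\mathfrak{k},X]=0$ alone does not give $X=0$ without an additional Killing form computation showing $[X,\mathfrak{p}]=0$. The cleaner standard argument is: $\Ad(z)=I$ gives $\Ad(k)\,e^{\ad X}=I$, where $\Ad(k)$ is $B_\sigma$-orthogonal and $e^{\ad X}$ is $B_\sigma$-symmetric positive definite; uniqueness of the polar decomposition forces $e^{\ad X}=I$, hence $\ad X=0$ and $X=0$ because $\mathfrak{g}$ is semisimple. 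A similar (minor) caveat applies to your closedness argument for $N$: nilpotency of the abstract Lie algebra $\mathfrak{n}$ alone does not make an analytic subgroup closed; one needs that $\ad X$ is nilpotent for $X\in\mathfrak{n}$ (as in Theorem \ref{GG}), so that $\Ad$ maps $N$ to unipotent matrices. With these repairs the proposal is complete.
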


The statements above imply the following	

\begin{corollary} 
\label{subgrsl}
If $G=\SL(n,\mathbb{R}),$ then $K=\SO(n),$ $A$ is the group of all real diagonal $(n\times n)$-matrices with unit determinant, $N$ could be considered as the group of all real upper triangular $(n\times n)$-matrices with units on the main diagonal, and $\Sol(n):=AN$ as the group of all real upper triangular 
$(n\times n)$-matrices with unit determinant.   
\end{corollary}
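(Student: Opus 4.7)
The plan is to apply Theorem~\ref{Iwgr} and Proposition~\ref{subgr} to the semisimple Lie group $G=\SL(n,\mathbb{R})$, using the explicit Iwasawa decomposition of $\mathfrak{sl}(n,\mathbb{R})$ already recorded in the example after Theorem~\ref{GG}. Once $K$, $A$, $N$ are identified as the images under $\exp$ of the subalgebras $\mathfrak{k}$, $\mathfrak{a}$, $\mathfrak{n}$ described there, the claim is essentially a matter of computing the exponentials of skew-symmetric, diagonal, and strictly upper triangular matrices.

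For $K=\exp(\mathfrak{k})$ I would use the identification $\mathfrak{k}=\mathfrak{so}(n)$ (skew-symmetric matrices are automatically of zero trace, so they lie in $\mathfrak{sl}(n,\mathbb{R})$). Since $\exp X$ of a skew-symmetric $X$ satisfies $(\exp X)^T\exp X=I$ and $\det(\exp X)=e^{\operatorname{tr} X}=1$, we have $\exp(\mathfrak{k})\subseteq \SO(n)$; surjectivity onto $\SO(n)$ follows from the standard fact that the exponential map of a compact connected Lie group is surjective. For $A=\exp(\mathfrak{a})$, the exponential of a diagonal matrix is computed entrywise, giving a positive diagonal matrix whose determinant equals $e^{\operatorname{tr}}=1$, and component-wise $\log$ provides a two-sided inverse onto the set of all positive diagonal matrices of determinant one. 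For $N=\exp(\mathfrak{n})$, strict upper triangularity makes every $X\in\mathfrak{n}$ nilpotent, so $\exp X=I+X+\tfrac{1}{2}X^2+\dots$ terminates after finitely many terms and yields an upper triangular matrix with ones on the diagonal; conversely $\log(I+Y)$ is a finite polynomial in $Y$ on the unipotent upper triangular group, so $\exp$ is a bijection onto this group.

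Finally, $\Sol(n)=AN$ is obtained by direct matrix multiplication: the product of a positive diagonal matrix of determinant one with a unipotent upper triangular matrix is upper triangular with positive diagonal entries whose product equals one; conversely every such matrix factors uniquely in this way by reading off the diagonal and then normalizing. Thus $\Sol(n)$ is the group of upper triangular matrices of determinant one (with positive diagonal entries), as asserted.

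The only subtlety, and the place to be careful, is that the subgroups $A$ and $\Sol(n)$ produced by $\exp$ are the identity components of the full groups of diagonal, respectively upper triangular, matrices of unit determinant (the diagonal entries must be positive); this is forced by the connectedness statement in Proposition~\ref{subgr}. The surjectivity of $\exp\colon\mathfrak{so}(n)\to\SO(n)$ is the one nontrivial ingredient, but it is a well-known consequence of compactness and connectedness of $\SO(n)$ and can simply be cited; everything else is linear-algebraic.
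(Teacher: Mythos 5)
Your proof is correct and follows the same route the paper intends: the paper offers no argument for this corollary beyond ``the statements above imply the following,'' and your exponential computations for $\mathfrak{k}$, $\mathfrak{a}$, $\mathfrak{n}$ (together with the surjectivity of $\exp$ on the compact connected group $\SO(n)$ and the unique $AN$-factorization of an upper triangular matrix) are precisely the verifications being left to the reader. Your remark that $A$ and $\Sol(n)$ consist only of matrices with \emph{positive} diagonal entries, i.e.\ the identity components forced by the connectedness in Proposition~\ref{subgr}, is a correct and worthwhile sharpening of the corollary's slightly loose wording.
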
 

\begin{corollary}
\label{diffsl}	
The Lie group $\SL(n,\mathbb{R})$ is diffeomorphic to Lie groups $\SO(n)\times AN$ and $\SO(n)\times A\times N.$ As a consequence, $\SL(2,\mathbb{R})$ is diffeomorphic to the commutative Lie group $\SO(2)\times A\times N.$  	
\end{corollary}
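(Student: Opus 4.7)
The plan is to specialize Theorem \ref{Iwgr} to $G=\SL(n,\mathbb{R})$ and combine it with the explicit description of $K$, $A$, $N$ given in Corollary \ref{subgrsl}. The whole statement reduces to a simple book-keeping of the factors.

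First I would apply Theorem \ref{Iwgr} to the connected semisimple Lie group $\SL(n,\mathbb{R})$; the theorem asserts that the multiplication map $(k,a,n)\mapsto kan$ is a diffeomorphism of $K\times A\times N$ onto $\SL(n,\mathbb{R})$. Substituting the identification $K=\SO(n)$ from Corollary \ref{subgrsl} immediately yields the diffeomorphism $\SL(n,\mathbb{R})\cong \SO(n)\times A\times N$.

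Second, to obtain the intermediate statement $\SL(n,\mathbb{R})\cong \SO(n)\times AN$, I would regroup the last two factors. By Proposition \ref{subgr}, $AN$ is a closed Lie subgroup, and the map $A\times N\to AN$, $(a,n)\mapsto an$, is a smooth bijection: injectivity follows from $A\cap N=\{e\}$ (in $\SL(n,\mathbb{R})$ the only unipotent diagonal matrix is the identity), and its differential at the identity is the canonical isomorphism $\mathfrak{a}\oplus\mathfrak{n}\to\mathfrak{a}+\mathfrak{n}$, which is nonsingular. Left-translation on $AN$ then shows it is a local diffeomorphism everywhere, hence a global diffeomorphism. Composing with the diffeomorphism from Theorem \ref{Iwgr} gives $\SL(n,\mathbb{R})\cong \SO(n)\times AN$.

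For the concluding sentence in the case $n=2$, I would read off from the explicit matrices in (\ref{sl2}) that $A$ and $N$ are one-parameter subgroups, each isomorphic to $(\mathbb{R},+)$, while $\SO(2)$ is the circle group. Hence $\SO(2)\times A\times N$ is a direct product of three abelian Lie groups and is therefore commutative. There is essentially no nontrivial obstacle here; the only point requiring a short argument is the $A\times N\cong AN$ diffeomorphism, and it is handled by the standard rank computation sketched above. Note that the commutativity claim refers to the abstract direct product group structure on $\SO(2)\times A\times N$ and not to the embedded subgroup $AN\subset \SL(2,\mathbb{R})$, which is the noncommutative group $\A^+(\mathbb{R})$.
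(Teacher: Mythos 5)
Your proof is correct and follows essentially the same route as the paper, which states this corollary as an immediate consequence of Theorem \ref{Iwgr} (via Corollary \ref{diffeo}) and the identifications in Corollary \ref{subgrsl}, without writing out a proof. The details you supply — the $A\times N\cong AN$ diffeomorphism and the remark that commutativity refers to the direct product structure rather than the non-abelian embedded subgroup $AN\subset\SL(2,\mathbb{R})$ — are accurate and consistent with the paper's intent.
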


\section{$(S_0,ds^2_0)$ as $(\mathbb{R},+)\times\Sol(2)$ with left-invariant Lorentz metric}
\label{prep}

This is a preparatory section. 

\begin{proposition}
\label{isomorphism}
There exist an isomorphism of the Lie group $G_3$ onto the Lie 
group $(\mathbb{R},+)\times\Sol(2)$ and corresponding realization of $(S_0,ds^2_0)$ as the Lie group $(\mathbb{R},+)\times\Sol(2)$ with left-invariant Lorentz metric. 
\end{proposition}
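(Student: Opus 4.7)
The plan is to produce an explicit Lie group isomorphism $\Phi \colon G_3 \to (\mathbb{R},+) \times \Sol(2)$ and then transport $ds_0^2$ to the target along $\Phi$; because any Lie group isomorphism is a diffeomorphism intertwining left translations, the resulting pseudo-Riemannian metric is automatically left-invariant and of the same Lorentz signature.

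To guess the formula for $\Phi$, one first passes to Lie algebras. By Corollary \ref{subgrsl}, the connected solvable group $\Sol(2) = AN$ consists of $2\times 2$ upper triangular matrices with positive diagonal and unit determinant, with Lie algebra spanned by $f_1, f_2$ of (\ref{basel2}) satisfying $[f_1, f_2] = 2 f_2$ by (\ref{brack}). Comparing with (\ref{sc}), where $e_0$ is central and $[e_1, e_2] = -e_2$, the assignment $e_0 \mapsto h_0,\ e_1 \mapsto -\tfrac{1}{2} f_1,\ e_2 \mapsto f_2$ (with $h_0$ a basis of the extra $\mathbb{R}$ factor) satisfies
$$
\varphi([e_1, e_2]) = -f_2 = -\tfrac{1}{2}[f_1, f_2] = [\varphi(e_1), \varphi(e_2)],
$$
so $\varphi$ is a Lie algebra isomorphism. (This is no accident: both algebras are $\mathbb{R} \oplus \mathfrak{aff}(1)$.)

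Exponentiating these generators and comparing with the coordinate description (\ref{C}) of $G_3$, the candidate map is
$$
\Phi(x_0, x_1, x_2) = \Bigl(x_0,\ \begin{pmatrix} e^{-x_1/2} & e^{x_1/2} x_2 \\ 0 & e^{x_1/2} \end{pmatrix}\Bigr).
$$
A short direct calculation then verifies the rest: multiplying two matrices of the form (\ref{C}) gives the group law $(x_0, x_1, x_2)\cdot(y_0, y_1, y_2) = (x_0+y_0, x_1+y_1, x_2 + e^{-x_1} y_2)$ on $G_3$, and one checks that $\Phi$ converts this law into componentwise multiplication on $(\mathbb{R},+) \times \Sol(2)$. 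Injectivity is evident from the formula, surjectivity is clear because $(x_0, a, b)$ with $a>0$ is hit by setting $x_1 = -2\ln a$ and $x_2 = b/a$, and smoothness of $\Phi$ and $\Phi^{-1}$ is immediate.

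Finally, the metric $(\Phi^{-1})^{*} ds_0^2$ on $(\mathbb{R},+) \times \Sol(2)$ has Lorentz signature because pullbacks preserve signature, and it is left-invariant because $\Phi \circ L_g = L_{\Phi(g)} \circ \Phi$ for every $g \in G_3$. This gives the desired realization, and $\Phi$ is an isometry between $(S_0, ds_0^2)$ and $(\mathbb{R},+)\times \Sol(2)$ with the new metric. The only mildly nontrivial step is fixing the normalization factors $-\tfrac{1}{2}$ and $1$ in $\varphi$; everything else is formal.
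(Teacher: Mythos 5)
Your proof is correct and follows essentially the same route as the paper: the paper defines the identical Lie algebra isomorphism (matching $e_1\leftrightarrow -f_1/2$, $e_2\leftrightarrow f_2$ against $[e_1,e_2]=-e_2$ and $[f_1,f_2]=2f_2$), exponentiates it to an explicit group isomorphism $\psi\colon(\mathbb{R},+)\times\Sol(2)\to G_3$ that is precisely the inverse of your $\Phi$, and then transports the metric, which is automatically left-invariant and Lorentz. (One trivial slip: with your formula the preimage of $\bigl(\begin{smallmatrix} a & b \\ 0 & a^{-1}\end{smallmatrix}\bigr)$ requires $x_2=ab$ rather than $b/a$, but surjectivity is clear in any case.)
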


\begin{proof}
Comparing (\ref{sc}) and (\ref{brack}), we see that the linear map $\varphi: \mathfrak{a}\oplus\mathfrak{n}\rightarrow \mathfrak{g}_2$ such that
\begin{equation}
\label{isom}
\varphi\left(\frac{-f_1}{2}\right)=e_1,\quad \varphi(f_2)=e_2
\end{equation}
is an isomorphism of Lie algebras. Let $\Sol(2)$ be the Lie group with the Lie algebra 
$\mathfrak{a}\oplus\mathfrak{n}$ for (\ref{sl2}). Then (\ref{isom}) defines isomorphism of Lie groups $\psi:\Sol(2)\rightarrow G_2:$   
\begin{equation}
\label{isomgr}
\psi\left(\left(\begin{array}{cc}
e^{-s/2} & 0 \\
0 & e^{s/2}\end{array}\right)
\left(\begin{array}{cc}
1 & r \\
0 & 1\end{array}\right)\right)=\left(\begin{array}{cccc}
e^{-s} & 0 & 0 & e^{-s}r \\
0 & 1 & 0 & s\\
0 & 0 & 1 & 0\\
0 & 0 & 0 & 1\end{array}\right),
\end{equation}
\begin{equation}
\label{isomgr1}
\psi\left(\left(\begin{array}{cc}
1 & r \\
0 & 1\end{array}\right)
\left(\begin{array}{cc}
e^{-s/2} & 0 \\
0 & e^{s/2}\end{array}\right)\right)=\left(\begin{array}{cccc}
e^{-s} & 0 & 0 & r \\
0 & 1 & 0 & s\\
0 & 0 & 1 & 0\\
0 & 0 & 0 & 1\end{array}\right).
\end{equation}

We can consider $(\mathbb{R},+)$ as a Lie algebra and as a Lie group.  Then the mappings
\begin{equation}
\label{isoepi}
t\in (\mathbb{R},+)\rightarrow 
\left(\begin{array}{cc}
0 & t \\
-t & 0\end{array}\right), \quad t\in (\mathbb{R},+)\rightarrow 
\left(\begin{array}{cc}
\cos t & \sin t \\
-\sin t & \cos t\end{array}\right)
\end{equation}
are correspondingly the isomorphism of Lie algebras and respective universal covering epimorpism of Lie groups. 
Then there exists unique
isomorphism $\psi$ of the Lie group $(\mathbb{R},+)\times\Sol(2)$ onto the Lie group $G_3,$ with properties (\ref{isomgr}), (\ref{isomgr1}), and $\psi(t)=t$
for $t\in (\mathbb{R},+).$ It follows from previous considerations that we shall realize $(S_0,ds^2_0)$ as the Lie group $(\mathbb{R},+)\times\Sol(2)$ with left-invariant Lorentz metric if components of this metric in the basis 
$\{f_0, -f_1/2, f_2\}$ of its Lie algebra will be as in (\ref{g0}). 

The corresponding orthonormal basis is
\begin{equation}
\label{base}
X=f_0,\quad Y'=-f_1/2,\quad Z'=\sqrt{2}(f_0-f_2),
\end{equation}
which we can change by
\begin{equation}
\label{nbase}
X=f_0,\quad Y=f_1/2,\quad Z=\sqrt{2}(f_2-f_0).
\end{equation}
Then 
\begin{equation}
\label{YZ}
[Y,Z]=[Y',Z']=\sqrt{2}f_2=Z+\sqrt{2}X.
\end{equation}
\end{proof}

\begin{remark}
The basis (\ref{nbase}) has a form
\begin{equation}
\label{nXYZ}  
X=\left(\begin{array}{cc}
0 & 1 \\
-1 & 0\end{array}\right),
Y=\frac{1}{2}\left(\begin{array}{cc}
1 & 0 \\
0 & -1\end{array}\right),
Z=\sqrt{2}\left(\begin{array}{cc}
0 & 0 \\
1 & 0\end{array}\right).
\end{equation}
The group $\Sol(2)$ is isomorphic to the Lie group of real lower triangular 
$(2\times 2)-$matrices with unit determinant.
\end{remark}

\section{Left-invariant Lorentz metrics on $\SO(2)\times \Sol(2)$ and 
$\SL(2,\mathbb{R})$}

In \cite{HilgNeeb1993}, the authors consider the Lie group $\SL(2,\mathbb{R})$
with left-invariant Lorentz metric and orthonormal basis with the signature $(-,+,+)$ of the form
\begin{equation}
\label{XYZ}  
X=\frac{1}{\sqrt{2}}
\left(\begin{array}{cc}
0 & 1 \\
-1 & 0\end{array}\right),
Y=\left(\begin{array}{cc}
1 & 0 \\
0 & -1\end{array}\right),
Z=\left(\begin{array}{cc}
0 & 1 \\
1 & 0\end{array}\right)\in \mathfrak{sl}(2,\mathbb{R}).
\end{equation}
We shall consider this basis as orthonormal with the signature $(+,-,-).$

\begin{theorem}
\label{main}	
1) For the Lorentz metric on $\SL(2,\mathbb{R})$ from \cite{HilgNeeb1993} with
orthonormal basis (\ref{XYZ}) and for corresponding by the Iwasawa diffeomorphism basis on $\SO(2)\times \Sol(2)$, the curvatures $k(X,Y)=k(X,Z)=k(Y,Z)=-2,$ while  for the orthonormal basis (\ref{nbase}) on $(\mathbb{R},+)\times \Sol(2)$, 
$k(X,Y)=k(X,Z)=k(Y,Z)=-\frac{1}{2}$. 

2) One-parameter subgroups in $\SL(2,\mathbb{R}),$
defined by $X,$ $Y,$ $Z,$ are geodesics.   
\end{theorem}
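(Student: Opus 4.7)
\medskip

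My plan is to reduce the statement to two routine calculations: the Koszul formula for the Levi-Civita connection of a left-invariant pseudo-Riemannian metric, and the standard criterion that on a Lie group with left-invariant metric the one-parameter subgroup through $U\in\mathfrak{g}$ is a geodesic iff $\nabla_U U=0$. Everything then follows from the Lie brackets of the three relevant bases plus the orthonormality relations $\langle X,X\rangle=1$, $\langle Y,Y\rangle=\langle Z,Z\rangle=-1$.

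For part 1 on $\SL(2,\mathbb{R})$, I would first compute directly from (\ref{XYZ}) the brackets
$[X,Y]=-\sqrt{2}\,Z$, $[X,Z]=\sqrt{2}\,Y$, $[Y,Z]=2\sqrt{2}\,X$, and then apply
\[
2\langle \nabla_U V, W\rangle = \langle [U,V],W\rangle - \langle [V,W],U\rangle + \langle [W,U],V\rangle
\]
to obtain $\nabla_X Y=\nabla_X Z=0$, while the remaining $\nabla_Y X$, $\nabla_Z X$, $\nabla_Y Z$, $\nabla_Z Y$ are nonzero scalar multiples of single basis vectors. Assembling $R(U,V)V=\nabla_U\nabla_V V-\nabla_V\nabla_U V-\nabla_{[U,V]}V$ then gives $R(U,V)V$ proportional to the third basis vector with coefficient of absolute value $2$, whence the orthonormal-basis sectional curvature equals $-2$ in all three cases. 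For the corresponding basis on $\SO(2)\times\Sol(2)$, the Iwasawa identification $T_e\SL(2,\mathbb{R})=\mathfrak{k}\oplus\mathfrak{a}\oplus\mathfrak{n}$ transports $(X,Y,Z)$ to vectors $(\tilde X,\tilde Y,\tilde Z)$ whose brackets in $\mathfrak{so}(2)\oplus\mathfrak{sol}(2)$ differ: $[\tilde X,\tilde Y]=[\tilde X,\tilde Z]=0$ and $[\tilde Y,\tilde Z]=2\tilde Z+2\sqrt{2}\,\tilde X$ (using that $Z=-f_0+2f_2$ in the Iwasawa decomposition). A parallel Koszul computation with these brackets yields the same three curvatures $-2$.

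For the basis (\ref{nbase}) on $(\mathbb{R},+)\times\Sol(2)$ the brackets are $[X,Y]=[X,Z]=0$ and $[Y,Z]=Z+\sqrt{2}\,X$ by (\ref{YZ}). Koszul produces $\nabla_X Y=\nabla_Y X=(\sqrt{2}/2)Z$, $\nabla_X Z=\nabla_Z X=-(\sqrt{2}/2)Y$, $\nabla_Y Z=(\sqrt{2}/2)X$, $\nabla_Z Y=-(\sqrt{2}/2)X-Z$, $\nabla_Z Z=Y$, and $\nabla_X X=\nabla_Y Y=0$. Computing $R(U,V)V$ gives expressions such as $R(Y,Z)Z=(1/2)Y$ and analogues for the other two planes, all with coefficient $1/2$, so in the orthonormal basis each sectional curvature equals $-1/2$.

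Part 2 is the cleanest step: on a Lie group with left-invariant metric, $\nabla_U U=0$ is equivalent, via Koszul, to $\langle [U,W],U\rangle=0$ for every $W\in\mathfrak{g}$, i.e.\ to $\ad_U$ being skew with respect to the metric on the line $\mathbb{R}U$. For $U\in\{X,Y,Z\}$ of (\ref{XYZ}) and the brackets above, each of the three numbers $\langle [X,Y],X\rangle$, $\langle [X,Z],X\rangle$, $\langle [Y,X],Y\rangle$, $\langle [Y,Z],Y\rangle$, $\langle [Z,X],Z\rangle$, $\langle [Z,Y],Z\rangle$ vanishes because the bracket is a scalar multiple of the third basis vector, orthogonal to $U$. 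Hence $\nabla_X X=\nabla_Y Y=\nabla_Z Z=0$ and the three one-parameter subgroups are geodesics.

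The only real obstacle is the bookkeeping: two different three-dimensional Lie algebra structures (one simple, one solvable-with-center) must be tracked in parallel, and signs must be handled carefully in the Lorentzian Koszul identity where basis vectors may have negative self-inner product. No conceptual difficulty arises beyond this organizational one.
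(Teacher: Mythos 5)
Your proposal follows essentially the same route as the paper: the Koszul formula specialized to left-invariant fields, the brackets $[X,Y]=-\sqrt{2}Z$, $[X,Z]=\sqrt{2}Y$, $[Y,Z]=2\sqrt{2}X$ on $\mathfrak{sl}(2,\mathbb{R})$ (respectively $[Y,Z]=2Z+2\sqrt{2}X$ and $[Y,Z]=Z+\sqrt{2}X$ with $X$ central on the two product groups), the resulting covariant derivatives and curvature operators, and the criterion $\nabla_UU=0$ for the one-parameter subgroups to be geodesics; all the connection coefficients and curvature values you list agree with the paper's. One small slip in your wording: $R(U,V)V$ comes out proportional to $U$ itself (e.g.\ $R(X,Y)Y=-2X$), not to the third basis vector --- were it proportional to the third vector, its inner product with $U$ would vanish and the sectional curvature could not equal $-2$.
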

\begin{proof}
For any (pseudo-)Riemannian manifold $M$ with (pseudo-)metric tensor $(\cdot,\cdot),$ the Levi-Civita connection $\nabla$, and smooth vector fields $X,$ $Y,$ $Z$ is valid the following equation (3.5.(7)) from \cite{GrKlMe1971}:
\begin{equation}
\label{nabla}
(\nabla_XY,Z)=\frac{1}{2}[X(Y,Z)+Y(Z,X)-Z(X,Y)+(Z,[X,Y])+(Y,[Z,X])-(X,[Y,Z])].	
\end{equation}
As a consequence, if $(M,(\cdot,\cdot))$ is a Lie group $G$ with left-invariant
(pseudo-)metric $(\cdot,\cdot)$ of the signature $(+,-,-)$ and $X,$ $Y,$ $Z$ are left-invariant, then 
\begin{equation}
\label{nabla1}
(\nabla_XY,Z)=\frac{1}{2}[(Z,[X,Y])+(Y,[Z,X])-(X,[Y,Z])].	
\end{equation}

It follows from (\ref{XYZ}) that
\begin{equation}
\label{brXYZ}
[X,Y]=-\sqrt{2}Z,\quad [X,Z]=\sqrt{2}Y,\quad [Y,Z]=2\sqrt{2}X.	
\end{equation}
Let us apply (\ref{nabla1}) and (\ref{brXYZ}) in the further computations.
$$(\nabla_XY,Z)=0,\quad (\nabla_XY,X)=(\nabla_XY,Y)=0,\quad \nabla_XY=0,$$
$$(\nabla_YY,X)=(\nabla_YY,Y)=(\nabla_YY,Z)=0,\quad \nabla_YY=0,$$
$$(\nabla_ZY,X)=-\sqrt{2},\quad (\nabla_ZY,Y)=(\nabla_ZY,Z)=0,\quad 
\nabla_ZY=-\sqrt{2}X,$$
$$\nabla_YZ=\nabla_ZY+[Y,Z]=\sqrt{2}X,$$
\begin{equation}
\label{Ri}
R(X,Y)Y=\nabla_X\nabla_YY-\nabla_Y\nabla_XY-\nabla_{[X,Y]}Y=
\sqrt{2}\nabla_ZY=-2X.
\end{equation}
Then $k(X,Y)=(R(X,Y)Y,X)=-2.$ Analogously, we obtain 
$$\nabla_XZ=0,\quad \nabla_XX=0,\quad \nabla_ZZ=0,\quad k(X,Z)=(R(X,Z)Z,X)=-2,$$
$$R(Y,Z)Z=\nabla_Y\nabla_ZZ-\nabla_Z\nabla_YZ-\nabla_{[Y,Z]}Z=
-\sqrt{2}\nabla_ZX=-\sqrt{2}(\nabla_XZ+[Z,X])=2Y,$$
$$k(Y,Z)=(R(Y,Z)Z,Y)=-2.$$

2) $[Y,Z]=2Z+2\sqrt{2}X$ in $\mathfrak{so}(2)\oplus\mathfrak{sol}(2).$ 

We look for $Z$ as $\alpha f_2+\beta X$,  see (\ref{basel2}). It is easy to see that
$\alpha=2,$ $\beta=-\sqrt{2},$  
$$[Y,Z]=[Y, 2f_2-\sqrt{2} X]=2[f_1,f_2]=4f_2=2(Z+\sqrt{2}X)=2Z+2\sqrt{2}X.$$

$$(\nabla_XY,Z)=-\sqrt{2},\quad (\nabla_XY,X)=(\nabla_XY,Y)=0,\quad \nabla_XY=\sqrt{2}Z,$$
$$(\nabla_YY,X)=(\nabla_YY,Y)=(\nabla_YY,Z)=0,\quad \nabla_YY=0,$$
$$(\nabla_ZY,X)=-\sqrt{2},\quad (\nabla_ZY,Y)=0,\quad (\nabla_ZY,Z)=2,\quad 
\nabla_ZY=-\sqrt{2}X-2Z,$$
$$\nabla_YZ=\nabla_ZY+[Y,Z]=\sqrt{2}X,$$
$$R(X,Y)Y=\nabla_X\nabla_YY-\nabla_Y\nabla_XY-\nabla_{[X,Y]}Y=
-\sqrt{2}\nabla_YZ=-2X.$$
Then $k(X,Y)=(R(X,Y)Y,X)=-2.$ Analogously, we obtain 
$$\nabla_XZ=-\sqrt{2}Y,\nabla_ZX=\nabla_XZ=-\sqrt{2}Y, \quad \nabla_ZZ=2Y,\quad 
k(X,Z)=(R(X,Z)Z,X)=-2,$$
$$R(Y,Z)Z=\nabla_Y\nabla_ZZ-\nabla_Z\nabla_YZ-\nabla_{[Y,Z]}Z=$$
$$\nabla_Y(2Y)-\nabla_Z(\sqrt{2}X)-\nabla_{(2Z+2\sqrt2X)}Z=$$
$$2Y-2\nabla_ZZ-2\sqrt{2}\nabla_XZ=2Y-4Y+4Y=2Y,$$
$$k(Y,Z)=(R(Y,Z)Z,Y)=-2.$$

Let us compute analogous expressions for $(S_0,ds_0^2),$ using results of 
Section \ref{prep}. 

$$(\nabla_XY,Z)=-\frac{\sqrt{2}}{2},\quad (\nabla_XY,X)=(\nabla_XY,Y)=0,\quad \nabla_XY=\frac{\sqrt{2}}{2}Z,$$
$$(\nabla_YY,X)=(\nabla_YY,Y)=(\nabla_YY,Z)=0,\quad \nabla_YY=0,$$
$$(\nabla_ZY,X)=-\frac{\sqrt{2}}{2},\quad (\nabla_ZY,Y)=0,\quad (\nabla_ZY,Z)=1,\quad 
\nabla_ZY=-\frac{\sqrt{2}}{2}X-Z,$$
$$\nabla_YZ=\nabla_ZY+[Y,Z]=\frac{\sqrt{2}}{2}X,$$
$$R(X,Y)Y=\nabla_X\nabla_YY-\nabla_Y\nabla_XY-\nabla_{[X,Y]}Y=
-\frac{\sqrt{2}}{2}\nabla_YZ=-\frac{1}{2}X.$$
Then $k(X,Y)=(R(X,Y)Y,X)=-\frac{1}{2}.$ Analogously, we obtain 
$$\nabla_XZ=-\frac{\sqrt{2}}{2}Y,\nabla_ZX=\nabla_XZ=-\frac{\sqrt{2}}{2}Y, \quad \nabla_ZZ=Y,\quad k(X,Z)=(R(X,Z)Z,X)=-\frac{1}{2},$$
$$R(Y,Z)Z=\nabla_Y\nabla_ZZ-\nabla_Z\nabla_YZ-\nabla_{[Y,Z]}Z=$$
$$\nabla_Y(Y)-\nabla_Z\left(\frac{\sqrt{2}}{2}X\right)-\nabla_{(Z+\sqrt2X)}Z=$$
$$\frac{1}{2}Y-\nabla_ZZ-\sqrt{2}\nabla_XZ=\frac{1}{2}Y-Y+Y=\frac{1}{2}Y,$$
$$k(Y,Z)=(R(Y,Z)Z,Y)=-\frac{1}{2}.$$

Obtained equalities imply all statements of Theorem \ref{main}.
\end{proof}

\begin{remark}
It is similar, that the left-invariant Lorentz metric on $\widetilde{\SL(2,\mathbb{R})}$ from \cite{HilgNeeb1993} is isometric to G\"odel metric $(S_0,ds^2_0)$ induced by G\"odel metric (\ref{lig}) with $a=\frac{1}{\sqrt{2}}.$
\end{remark}

\section{Isometry of non-isomorphic sub-Riemannian Lie groups}

Let us change notation $x_1\leftrightarrow x_2,$ $x_0\rightarrow x_3.$ Then the mapping
\begin{equation}
\label{red}	
\left(\begin{array}{cccc}
e^{-x_2} & 0 & 0 & x_1\\
0 & 1 & 0 & x_2\\
0 & 0 & 1 & x_3 \\
0 & 0 & 0 &  1 \end{array}\right)\rightarrow
\left(\begin{array}{ccc}
e^{-x_2} & 0 & x_1\\
0 & 1 &  x_3 \\
0 & 0 &  1 \end{array}\right) 
\in A^+(\mathbb{R})\times (\mathbb{R},+)
\end{equation}
is an isomorphism of matrix Lie groups with the basis of the Lie algebra
$$e_1=\left(\begin{array}{ccc}
0 & 0 & 1\\
0 & 0 & 0 \\
0 & 0 & 0 \end{array}\right),
e_2=\left(\begin{array}{ccc}
-1 & 0 & 0\\
0 & 0 &  0 \\
0 & 0 &  0 \end{array}\right),
e_3=\left(\begin{array}{ccc}
0 & 0 & 0\\
0 & 0 & 1 \\
0 & 0 & 0 \end{array}\right),$$
such that only $[e_1,e_2]=-[e_2,e_1]=e_1$ are unique nonzero Lie brackets.

Analogously to (\ref{isomgr}) and (\ref{isomgr1}), the mapping
\begin{equation}
\label{F}
F\left(\left(\begin{array}{ccc}
e^{-x_2} & 0 & x_1\\
0 & 1 &  x_3 \\
0 & 0 &  1 \end{array}\right)\right)=\left(\left(\begin{array}{cc}
e^{-x_2/2} & x_1\\
0 & e^{x_2/2} 
\end{array}\right), \left(\begin{array}{cc}
\cos x_3 & \sin x_3\\
-\sin x_3 & \cos x_3 
\end{array}\right)\right)
\end{equation}
is the universal covering epimorphism of $ A^+(\mathbb{R})\times (\mathbb{R},+)$ onto $\Sol(2)\times\SO(2).$

The standard left-invariant sub-Riemannian structure on 
$A^+(\mathbb{R})\times (\mathbb{R},+)$ is defined in \cite{AgrBar2012} by the orthonormal frame $\Delta=span\{e_2,e_1+e_3\}.$ 
Then there is unique sub-Riemannian structure
on $\Sol(2)\times \SO(2)$ such that $F$ is a local isometry; it is defined by the orthonormal frame $\overline{\Delta}=span\{\overline{e_2},\overline{e_1}+\overline{e_3}\},$
where
\begin{equation}
\label{base1}
\overline{e_1}=\left(\begin{array}{cc}
0 & 1\\
0 & 0 
\end{array}\right), \quad 
\overline{e_2}=\left(\begin{array}{cc}
-1/2 & 0\\
0 & 1/2 
\end{array}\right), \quad \overline{e_3}=\left(\begin{array}{cc}
0 & 1\\
-1 & 0 
\end{array}\right).
\end{equation}

Now we follow \cite{AgrBar2012}. Let $a=e^{-x_2}$ and $b=x_1$ in the second
matrix of (\ref{red}).

The subgroup $A^+(\mathbb{R})$ is diffeomorphic to the half-plane 
$\{(a,b)\in\mathbb{R}^2, a>0\},$ which is desrcibed in the standard polar
coordinates as $\{(\rho,\theta)| \rho>0, -\pi/2< \theta < \pi/2\}.$

\begin{theorem}
\label{AB} \cite{AgrBar2012}. The diffeomorphism $\Psi:A^+(\mathbb{R})\times \S^1\rightarrow \SL(2,\mathbb{R})$ defined by
\begin{equation}
\label{Psi}
\Psi(\rho,\theta,\varphi)=\frac{1}{\sqrt{\rho\cos\theta}}
\left(\begin{array}{cc}
\cos \varphi & \sin \varphi\\
\rho\sin (\theta-\varphi) & \rho\cos (\theta-\varphi) 
\end{array}\right),
\end{equation}
where $(\rho,\theta)\in A^+(\mathbb{R})$ and $\varphi\in \S^1,$ is a global
sub-Riemannian isometry.
\end{theorem}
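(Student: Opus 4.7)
The plan is to recognize $\Psi$ as an Iwasawa product, i.e., to factor
\begin{equation*}
\Psi(\rho,\theta,\varphi) = L(\rho,\theta)\cdot R(\varphi)
\end{equation*}
with $L(\rho,\theta)$ a lower-triangular element of $\Sol(2)$ and $R(\varphi)\in\SO(2)$, and then to identify this factorization with the composition of the covering map $F$ of (\ref{F}) (modulo a reparametrization) followed by the Iwasawa multiplication $\mu\colon\Sol(2)\times\SO(2)\to\SL(2,\mathbb{R})$ guaranteed by Corollary~\ref{diffsl}.

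The first step is a direct matrix computation: I multiply $\Psi(\rho,\theta,\varphi)$ on the right by $R(\varphi)^{-1}$ and verify that the product equals
\begin{equation*}
L(\rho,\theta) := \frac{1}{\sqrt{\rho\cos\theta}}\begin{pmatrix} 1 & 0 \\ \rho\sin\theta & \rho\cos\theta \end{pmatrix},
\end{equation*}
which lies in the lower-triangular realization of $\Sol(2)$ recorded in the Remark of Section~\ref{prep}. Writing $a=\rho\cos\theta$, $b=\rho\sin\theta$ for the polar-to-Cartesian change on $A^+(\mathbb{R})$, one checks by a short multiplication that $L$ is a Lie group isomorphism $A^+(\mathbb{R})\to\Sol(2)$, identifiable (up to transposition) with the $\Sol(2)$-component of $F$; together with the standard identification $\varphi\mapsto R(\varphi)$ of $S^1$ with $\SO(2)$, this gives a Lie group isomorphism $A^+(\mathbb{R})\times S^1\to\Sol(2)\times\SO(2)$ and hence, automatically, a sub-Riemannian isometry between the left-invariant structures $\Delta$ and $\overline{\Delta}$.

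It then remains to show that the Iwasawa multiplication $\mu$ is itself a sub-Riemannian isometry from $(\Sol(2)\times\SO(2),\overline{\Delta})$ onto $\SL(2,\mathbb{R})$ equipped with the standard left-invariant sub-Riemannian structure of \cite{AgrBar2012}. This is the main obstacle. The two group structures genuinely differ: left-multiplication in $\SL(2,\mathbb{R})$ by $g=s_0 k_0$ does not correspond to product left-multiplication on $\Sol(2)\times\SO(2)$ but to the twisted assignment $(s,k)\mapsto (s_0(k_0 s k_0^{-1}), k_0 k)$ followed by re-extracting Iwasawa factors. Consequently, left-invariance of $\overline{\Delta}$ under $\Sol(2)\times\SO(2)$ does not immediately yield left-invariance of $\mu_{*}\overline{\Delta}$ under $\SL(2,\mathbb{R})$.

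I would resolve this as follows. The differential of $\mu$ at the identity is, by the Lie-algebra Iwasawa decomposition, the identity map $\mathfrak{sol}(2)\oplus\mathfrak{so}(2)\to\mathfrak{sl}(2,\mathbb{R})$, so it sends $\overline{\Delta}_{(e,e)}$ to a definite $2$-plane $\Pi\subset\mathfrak{sl}(2,\mathbb{R})$. Using the bracket relations (\ref{brack}) and the explicit product $\Psi(\rho,\theta,\varphi)=L(\rho,\theta)R(\varphi)$, I would then compute $d\Psi$ at a general point and verify directly that it carries $\Delta_{(\rho,\theta,\varphi)}$ isometrically onto the left-translate of $\Pi$ by $\Psi(\rho,\theta,\varphi)$. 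This identifies $\mu_{*}\overline{\Delta}$ as the left-$\SL(2,\mathbb{R})$-invariant distribution extending $\Pi$ --- the Agrachev--Barilari sub-Riemannian structure on $\SL(2,\mathbb{R})$ --- and Theorem~\ref{AB} follows.
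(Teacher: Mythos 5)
Your first step is correct and is precisely what the paper itself verifies: multiplying (\ref{Psi}) on the right by $R(\varphi)^{-1}$ gives $\frac{1}{\sqrt{\rho\cos\theta}}\left(\begin{smallmatrix}1 & 0\\ \rho\sin\theta & \rho\cos\theta\end{smallmatrix}\right)=n\overline{a}$ with $a=\rho\cos\theta$, $b=\rho\sin\theta$, which is exactly the content of Proposition \ref{PsiAB}, whose one-line proof is the same matrix check. Be aware, however, that the paper does not prove Theorem \ref{AB} at all: the theorem is imported from \cite{AgrBar2012}, and the paper's own contribution is only the observation that $\Psi$ coincides with an Iwasawa diffeomorphism. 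Your proposal therefore attempts more than the paper does, and the part that goes beyond the factorization is where it falls short.

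The gap is in your final paragraph. You correctly diagnose that the isometry property cannot be extracted formally from left-invariance, since $\Psi$ (equivalently the multiplication map $\mu$) is not a group homomorphism into $\SL(2,\mathbb{R})$ and left translation by $s_0k_0$ becomes a twisted map on $\Sol(2)\times\SO(2)$. Having named this ``the main obstacle,'' you then only announce that you \emph{would} compute $d\Psi$ at a general point and verify that it carries the frame orthonormally onto the left-translate of $\Pi$. That pointwise computation is the entire analytic content of the theorem; without it your argument establishes only that $\Psi$ is a diffeomorphism, not an isometry. A secondary but real problem: your $L(\rho,\theta)$ lands in the lower-triangular realization of $\Sol(2)$, with $dL(e_1)=\tilde e_1=\overline{e_1}^{\,T}$ at the identity, whereas the frame $\overline{\Delta}$ of (\ref{base1}) and the covering $F$ of (\ref{F}) use the upper-triangular $\overline{e_1}$. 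Saying ``up to transposition'' does not dispose of this, because transposition is an anti-automorphism; you must either conjugate all the structures consistently (e.g. via $g\mapsto (g^{-1})^{T}$) or check directly that the plane $d\Psi(\Delta)$ at the identity is the Agrachev--Barilari plane and that the inner product is preserved. As written, the two halves of your argument use two different copies of $\Sol(2)$ and do not yet fit together.
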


\begin{remark}
\label{rem}
Using the above locally isometric covering $F$, we can and will understand
$\Psi$ as the global isometry between $\Sol(2)\times \SO(2)$ and $\SL(2,\mathbb{R})$ supplied with sub-Riemannian metrics defined by the same frame $\overline{\Delta}.$ 
\end{remark}

\begin{corollary}
$A^+(\mathbb{R})\times (\mathbb{R},+)$ with sub-Riemannian metric, defined by the frame $\Delta,$ is isometric to the universal covering $\widetilde{\SL(2,\mathbb{R})}$ of $\SL(2,\mathbb{R})$ with sub-Riemannian metric such that the 
natural universal covering epimorphism of $\widetilde{\SL(2,\mathbb{R})}$ onto $\SL(2,\mathbb{R})$ with sub-Riemannian metric, defined by the frame
$\overline{\Delta},$ is a local isometry.	
\end{corollary}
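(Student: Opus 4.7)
The plan is to exhibit the desired isometry as the composition of the universal covering epimorphism $F$ from (\ref{F}) with the global sub-Riemannian isometry $\Psi$ of Theorem \ref{AB}, and then invoke uniqueness of the universal covering of $\SL(2,\mathbb{R})$.

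First I would verify that $F$ is a universal covering and a local sub-Riemannian isometry from $A^+(\mathbb{R})\times(\mathbb{R},+)$ (with frame $\Delta$) onto $\Sol(2)\times\SO(2)$ (with frame $\overline{\Delta}$). Simple connectedness of the source follows from that of each factor: $A^+(\mathbb{R})$ is diffeomorphic to an open half-plane and $(\mathbb{R},+)$ to $\mathbb{R}$. On the first factor $F$ restricts to the Lie-group isomorphism $A^+(\mathbb{R})\to\Sol(2)$ read off from (\ref{F}), and on the second it is the classical universal covering of $\SO(2)$; the kernel of $F$ is therefore discrete (isomorphic to $2\pi\mathbb{Z}$), so $F$ is a genuine covering map, universal because its source is simply connected. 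The local isometry property is built into the definition of $\overline{\Delta}$ in (\ref{base1}): that frame is precisely the image of $\Delta$ under $dF$.

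Next, by Theorem \ref{AB} together with Remark \ref{rem}, $\Psi:\Sol(2)\times\SO(2)\to\SL(2,\mathbb{R})$ is a global sub-Riemannian isometry. Hence $\Psi\circ F:A^+(\mathbb{R})\times(\mathbb{R},+)\to\SL(2,\mathbb{R})$ is a surjective local sub-Riemannian isometry whose domain is simply connected and whose fibers coincide with those of $F$, so it is a universal covering map of $\SL(2,\mathbb{R})$. By uniqueness of the universal covering there is a Lie-group isomorphism $A^+(\mathbb{R})\times(\mathbb{R},+)\cong\widetilde{\SL(2,\mathbb{R})}$ under which $\Psi\circ F$ is identified with the natural projection $\widetilde{\SL(2,\mathbb{R})}\to\SL(2,\mathbb{R})$. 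Equipping $\widetilde{\SL(2,\mathbb{R})}$ with the pullback of the $\overline{\Delta}$-metric from $\SL(2,\mathbb{R})$ then makes this projection a local isometry by construction, and the identification above becomes precisely the sub-Riemannian isometry asserted by the corollary.

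There is no serious obstacle: the argument is a formal chaining of a local isometric covering with a global isometry, with uniqueness of the universal cover doing the rest. The one subtlety worth writing out is that $\Psi\circ F$ is a bona fide covering map (its fibers, inherited from $F$, form the properly discontinuous orbit of a $\mathbb{Z}$-action on $A^+(\mathbb{R})\times(\mathbb{R},+)$), so that the resulting identification with $\widetilde{\SL(2,\mathbb{R})}$ is a Lie-group isomorphism rather than merely a continuous bijection.
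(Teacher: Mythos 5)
The paper states this corollary without proof, and your route---compose the local isometric covering $F$ from (\ref{F}) with the global isometry $\Psi$ of Theorem \ref{AB}, observe that $\Psi\circ F$ is a covering of $\SL(2,\mathbb{R})$ with simply connected total space, lift through the natural projection, and pull back the $\overline{\Delta}$-metric---is exactly the intended (and essentially the only) argument; the metric bookkeeping is correct.

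However, the ``one subtlety'' you single out at the end is false. The identification of $A^+(\mathbb{R})\times(\mathbb{R},+)$ with $\widetilde{\SL(2,\mathbb{R})}$ furnished by uniqueness of the universal covering cannot be a Lie-group isomorphism: the first group is solvable, while the second has the simple Lie algebra $\mathfrak{sl}(2,\mathbb{R})$, and the non-isomorphism of these two groups is the entire point of this section (and of Theorem \ref{AB}). The gap in your reasoning is that $\Psi$ is only a diffeomorphism, not a homomorphism (it is the Iwasawa map $(n\overline{a},k)\mapsto n\overline{a}k$ of Proposition \ref{PsiAB}), so $\Psi\circ F$ is not a homomorphism, and uniqueness of the universal cover yields only a diffeomorphism commuting with the two projections. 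That diffeomorphism is still a bijective local sub-Riemannian isometry, hence a global isometry of the Carnot--Carath\'eodory distances, which is all the corollary asserts; so the conclusion stands, but you should replace ``Lie-group isomorphism'' by ``diffeomorphism'' in both places where it occurs.
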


\begin{proposition}
\label{PsiAB}	
The global isometry $\Psi$ in the sense of Remark \ref{rem} is the
Iwasawa diffeomorphism of $\Sol(2)\times \SO(2)$ onto $\SL(2,\mathbb{R})$ of the view $(n\overline{a},k)\in NA\times \SO(2)\rightarrow n\overline{a}k\in NAK= \SL(2,\mathbb{R})$, where
$$n=\left(\begin{array}{cc}
1 & 0\\
b & 1 
\end{array}\right), \overline{a}=\left(\begin{array}{cc}
a^{-1/2} & 0\\
0 & a^{1/2} 
\end{array}\right), k=\left(\begin{array}{cc}
\cos\varphi & \sin\varphi\\
-\sin\varphi & \cos\varphi 
\end{array}\right), a=\rho\cos\theta, b=\rho\sin\theta.$$ 	
\end{proposition}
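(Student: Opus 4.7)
The statement is ultimately a direct matrix identity: once one writes out the Iwasawa product $n\overline{a}k$ for the three given matrices, it should coincide with the explicit formula (\ref{Psi}) for $\Psi(\rho,\theta,\varphi)$. So my plan is to check this by a short computation and then reconcile the bookkeeping with the identifications made earlier in the paper.

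First I would clarify the identification of $\Sol(2)$ with $N\overline{A}$. According to the Remark following Proposition~\ref{isomorphism}, here $\Sol(2)$ is realized as the group of real lower-triangular $2\times 2$ matrices with unit determinant. Every element of $\Sol(2)$ therefore admits a unique factorization $n\overline{a}$ with $n = \bigl(\begin{smallmatrix}1 & 0 \\ b & 1\end{smallmatrix}\bigr)$ and $\overline{a} = \mathrm{diag}(a^{-1/2}, a^{1/2})$ for some $a>0$, $b\in\mathbb{R}$. Substituting $a=\rho\cos\theta$, $b=\rho\sin\theta$ (the polar reparametrization of the half-plane $\{(a,b)\mid a>0\}$ used in \cite{AgrBar2012}) puts $(n\overline{a},k)$ into explicit correspondence with $(\rho,\theta,\varphi)\in A^+(\mathbb{R})\times \mathbb{S}^1$. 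By Theorem~\ref{Iwgr} and Corollary~\ref{diffsl}, the map $(n\overline{a},k)\mapsto n\overline{a}k$ is a diffeomorphism onto $\SL(2,\mathbb{R})$, so once the matrix product is shown to equal $\Psi(\rho,\theta,\varphi)$, the proposition follows.

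The main step is then the computation itself. I would first multiply $n\overline{a}$, obtaining the lower-triangular matrix $\bigl(\begin{smallmatrix}a^{-1/2} & 0 \\ ba^{-1/2} & a^{1/2}\end{smallmatrix}\bigr)$, and then multiply by $k$ on the right. Collecting entries, the bottom row involves $ba^{-1/2}\cos\varphi - a^{1/2}\sin\varphi$ and $ba^{-1/2}\sin\varphi + a^{1/2}\cos\varphi$. Substituting $a=\rho\cos\theta$, $b=\rho\sin\theta$ and pulling out the common factor $1/\sqrt{\rho\cos\theta}$, these simplify via the sine and cosine subtraction formulas to $\rho\sin(\theta-\varphi)$ and $\rho\cos(\theta-\varphi)$, while the top row becomes $(\cos\varphi,\sin\varphi)$. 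The resulting matrix coincides with (\ref{Psi}) term by term.

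The only subtlety, rather than an obstacle, is Remark~\ref{rem}: strictly speaking $\Psi$ in Theorem~\ref{AB} has source $A^+(\mathbb{R})\times \mathbb{S}^1$, and we are reinterpreting it on $\Sol(2)\times \SO(2)$ via the local isometry $F$ of (\ref{F}). Under $F$ the polar-coordinate element $(\rho,\theta)\in A^+(\mathbb{R})$ corresponds precisely to the lower-triangular factor $n\overline{a}\in \Sol(2)$ determined by $a=\rho\cos\theta$, $b=\rho\sin\theta$, while the $\mathbb{S}^1$-factor $\varphi$ maps to the rotation $k$; so the rewritten $\Psi$ is exactly the Iwasawa multiplication map $(n\overline{a},k)\mapsto n\overline{a}k$, completing the identification.
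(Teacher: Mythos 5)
Your proposal is correct and follows exactly the route of the paper, whose entire proof is the single sentence that one checks $n\overline{a}k$ equals the matrix in (\ref{Psi}); you simply carry out that multiplication explicitly (the bottom-row entries reducing to $\rho\sin(\theta-\varphi)$ and $\rho\cos(\theta-\varphi)$ via the subtraction formulas) and make the identifications via Remark \ref{rem} explicit. No gaps.
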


\begin{proof}
One needs simply to check that $n\overline{a}k$ is equal to the matrix in (\ref{Psi}).	
\end{proof}

\begin{remark}
Notice that $n=\exp(t\tilde{e}_1),$ $\overline{a}=\exp(s\overline{e_2}),$
where $\tilde{e}_1=(\overline{e}_1)^T,$ $^T$ is the sign of  transposition, $b=t$, and $a^{1/2}=e^{s/2}$. Also  $[\tilde{e}_1,\overline{e_2}]=-\tilde{e}_1.$
	
\end{remark}


\begin{thebibliography}{99}
	
\bibitem{God1949}
{\sl G\"odel~K.} An example of a new type of cosmological solutions of Einstein's field equations of gravitation. Rev. Mod. Phys., 21:3 (1949), 447--450.
	
\bibitem{Ber2024}
{\sl Berestovskii~V.N.} 
Timelike and isotropic geodesics of the G\"odel Universe as a Lie group with left-invariant Lorentz metric. Siber. Math. Journ. 65: 5 (2024), 12 p. To be published.
	
\bibitem{HilgNeeb1993}
{\sl Hilgert~Jo., Neeb~K.--H.} Lie semigroups and their Applications. Lect. Notes  Math., 1552. Springer-Verlag: Berlin, Heidelberg, 1993.
	
\bibitem{AgrBar2012}
{\sl Agrachev~A., Barilari~D.} Sub-Riemnnian structures on 3d Lie groups. Journal of Dynamical and Control Systems, 18:1 (2012), 21--44. DOI:10.1007/s10883-012-9133-8

\bibitem{FalbGor1996}
{\sl Falbel~E., Gorodski~C.} Sub-Riemannian homogeneous spaces in dimensions 3 and 4. Geom.Dedicata 62:3(1996), 227--252.
	
\bibitem{Helg2001}
{\sl Helgason~S.} Differential Geometry, Lie groups and Symmetric Spaces.
Graduate Studies in Mathematics, Vol. 34. Providence, R.I.: AMS, 2001.
	
\bibitem{GotoGr1978}
{\sl Goto~M., Grosshans~F.D.} Semisimple Lie algebras. Lecture Notes in
Pure and Applied Mathematics, Vol. 38. Marcel Dekker, Inc. New York and 
Basel, 1978.

\bibitem{GrKlMe1971}
{\sl Gromoll~D., Klingenberg~W., Meyer~W.} Riemannsche Geometrie im Grossen. Lect. Notes Math., 55. Springer-Verlag, Heidelberg, 1968.


	
\end{thebibliography}
\end{document}